\newtheorem{lemma}{Lemma}[section]
\newtheorem{theorem}[lemma]{Theorem}
\newtheorem{proposition}[lemma]{Proposition}
\newtheorem{corollary}[lemma]{Corollary}
\theoremstyle{definition}
\newtheorem{definition}[lemma]{Definition}
\newtheorem{remark}[lemma]{Remark}
\numberwithin{equation}{section}
\newcommand{\e}{E_{\ast}}
\newcommand{\V}{V_{\ast}}
\begin{document}

\title[Pullback and direct image of parabolic ample and nef bundles]{Pullback and direct image of 
parabolic ample and parabolic nef vector bundles}

\author[A. Bansal]{Ashima Bansal}
\address{Department of Mathematics, Shiv Nadar University, NH91, Tehsil
Dadri, Greater Noida, Uttar Pradesh -- 201314, India}
\email{ashima.bansal@snu.edu.in}

\author[I. Biswas]{Indranil Biswas}
\address{Department of Mathematics, Shiv Nadar University, NH91, Tehsil
Dadri, Greater Noida, Uttar Pradesh -- 201314, India}
\email{indranil.biswas@snu.edu.in, indranil29@gmail.com}
\subjclass[2020]{14E20, 14H60}
\keywords{Parabolic bundle, parabolic ample bundle, parabolic nef bundle}

\begin{abstract}
We prove that under a finite surjective map of irreducible smooth complex projective curves, the pullback and direct image of a parabolic ample 
(respectively, parabolic nef) vector bundle is again parabolic ample (respectively, parabolic nef) if and only 
if the original parabolic vector bundle is parabolic ample (respectively, parabolic nef).
\end{abstract}

\maketitle
\section{Introduction}
Let $X$ be an irreducible smooth complex projective curve with a subset of marked distinct $n$ points $D\, =\, 
\{x_1, \cdots , x_n\}\, \subset\, X$ called parabolic points. A parabolic vector bundle $\e$ on $(X,\, D)$ is 
a vector bundle $E$ on $X$ endowed with a weighted filtration of the fiber $E_{x_i}$ over each parabolic point 
$x_i\, \in \,D$, which is called a parabolic structure. The notion of parabolic vector bundles was first introduced 
by C. S. Seshadri in \cite{Se}. In \cite[Section 4]{AB}, a canonical construction of parabolic 
structures on the direct image and pullback of a parabolic vector bundle was defined. Furthermore, the notions 
of parabolic ample and parabolic nef vector bundles were introduced in \cite{Bi2} and \cite{BS}, respectively, 
extending the classical notions of ample and nef vector bundles to the parabolic setting.

In Section 2 of this article, we establish several equivalent conditions characterizing the parabolic
ample vector bundles. Analogous criteria are established for the parabolic nef vector bundles.

It is well known that under a finite surjective morphism of varieties, the pullback of an ample vector bundle 
remains ample, and the converse also valid in the sense that if the pullback is ample then the original vector 
bundle is also ample. Similarly, for a proper surjective morphism, the pullback of a vector bundle is nef if 
and only if the original vector bundle is nef. On the other hand, under a finite flat morphism between smooth 
projective varieties, the direct image of an ample vector bundle need not be ample in general. For example,
for a degree two map $f\,:\, {\mathbb P}^1\, \longrightarrow\, {\mathbb P}^1$, we have
$f_* {\mathcal O}_{{\mathbb P}^1}(1) \,=\, {\mathcal O}_{{\mathbb P}^1}\oplus {\mathcal O}_{{\mathbb P}^1}$.
However in \cite{BLNN}, the authors have proved that the dual of an ample (anti-ample) vector bundle behaves well
under direct image; it was also shown there that the converse of it need not be true even for vector bundles on 
curves.

Motivated by the above facts, we investigate analogous questions for parabolic vector bundles over curves; 
this is done in Section 3 and Section 4. In the parabolic setting, the behavior turns out to be well 
structured: for a non-constant morphism $f\,:\,X\,\longrightarrow\,Y$ between smooth projective curves, a 
parabolic vector bundle $\e$ is parabolic ample (respectively, parabolic anti-ample) if and only if 
$f^{\ast}\e$ is parabolic ample (respectively, parabolic anti-ample). Furthermore, we establish similar 
results for direct images, showing that the direct image of a parabolic ample vector bundle (respectively, 
parabolic anti-ample) is again parabolic ample (respectively, parabolic anti-ample), and the converse also 
holds. The same results hold for parabolic nef vector bundles as well.

\section{Criteria for parabolic ample and Parabolic nef vector bundles}

For standard notation and facts about parabolic vector bundles used here we follow \cite{Bi1} and \cite{MY}. 
Let $X$ be an irreducible smooth complex projective curve. Fix a parabolic divisor $D$ on $X$, which is a 
reduced effective divisor, and let $\e$ be a parabolic vector bundle on $(X,\, D)$. We recall that every 
parabolic vector bundle $\e$ has the Harder-Narasimhan filtration (see \cite{MY}). It is the unique increasing 
filtration
\begin{equation}\label{hn}
0\,=\, E^{0}\,\subsetneq\, E^{1}\,\subsetneq\,\dots\,\subsetneq E^{m}\,=\,E,
\end{equation}
where each $E^{i}$ is a sub-bundle of $E$ such that the quotient $(E^{i+1}/E^{i})_{\ast}$, $0\, \leq\, i\,
\leq\, m-1$, equipped the induced parabolic structure is parabolic semistable satisfying the condition 
\begin{equation}\label{par}
\textrm{par-}\mu((E^{i}/E^{i-1})_{\ast})\ > \ \textrm{par-}\mu((E^{i+1}/E^{i})_{\ast})
\end{equation}
for all $1\,\leq\, i\, \leq\, m-1$.
The minimal parabolic slope and the minimal parabolic degree of $\e$ are defined as:
\begin{equation*}
\textrm{par-}\mu_{\textrm{min}}(E_{\ast})\,:=\, \textrm{par-}\mu((E^{m}/E^{m-1})_{\ast}) \quad \textrm{and} \quad \textrm{par-}d_{\textrm{min}}(\e)\,:=\, \textrm{par-deg}((E^{m}/E^{m-1})_{\ast}).
\end{equation*}

\begin{lemma}\label{min}
Let $\e$ be a parabolic vector bundle on $X$ with the Harder-Narasimhan filtration as in \eqref{hn}. Let
$Q_{\ast}$ be any quotient parabolic vector bundle of $\e$. Then $\emph{par-}\mu(Q_{\ast})\,\geq\,$ \emph{par-}${\mu}_{\emph{min}}(\e)$.
\end{lemma}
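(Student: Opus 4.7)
My plan is to prove the lemma by induction on the length $m$ of the Harder-Narasimhan filtration \eqref{hn}. The base case $m=1$ is immediate: $\e$ is then parabolic semistable, so by the very definition of parabolic semistability every parabolic quotient $Q_{\ast}$ satisfies $\textrm{par-}\mu(Q_{\ast})\,\geq\, \textrm{par-}\mu(\e)\,=\, \textrm{par-}\mu_{\textrm{min}}(\e)$.

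For the inductive step, I would assume the result for all parabolic bundles whose HN filtration has length strictly less than $m$, and consider a surjection $\phi\,\colon\, \e\,\twoheadrightarrow\, Q_{\ast}$. The key move is to restrict $\phi$ to the first piece $E^1_{\ast}$ of \eqref{hn}: set $Q'_{\ast}\,:=\, \phi(E^1_{\ast})\,\subseteq\, Q_{\ast}$ with the induced parabolic structure and $Q''_{\ast}\,:=\, Q_{\ast}/Q'_{\ast}$. If $Q'_{\ast}\,=\,0$, then $\phi$ factors through $(E/E^1)_{\ast}$, whose HN filtration $0\,\subsetneq\, E^2/E^1\,\subsetneq\, \dots\, \subsetneq\, E^m/E^1$ has length $m-1$ and identical minimal slope $\textrm{par-}\mu_{\textrm{min}}(\e)$, so the inductive hypothesis applied to the quotient $(E/E^1)_{\ast}$ delivers the conclusion.

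If $Q'_{\ast}\,\neq\,0$, then $Q'_{\ast}$ is a nonzero parabolic quotient of the parabolic semistable bundle $E^1_{\ast}$, so semistability of $E^1_{\ast}$ forces $\textrm{par-}\mu(Q'_{\ast})\,\geq\, \textrm{par-}\mu(E^1_{\ast})\,>\, \textrm{par-}\mu_{\textrm{min}}(\e)$ where the strict inequality comes from \eqref{par}. If $Q''_{\ast}\,=\,0$ we are already done; otherwise $Q''_{\ast}$ is a nonzero parabolic quotient of $(E/E^1)_{\ast}$, and the inductive hypothesis gives $\textrm{par-}\mu(Q''_{\ast})\,\geq\, \textrm{par-}\mu_{\textrm{min}}(\e)$. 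Additivity of parabolic degree and rank in the short exact sequence $0\to Q'_{\ast}\to Q_{\ast}\to Q''_{\ast}\to 0$ then exhibits $\textrm{par-}\mu(Q_{\ast})$ as a convex combination of $\textrm{par-}\mu(Q'_{\ast})$ and $\textrm{par-}\mu(Q''_{\ast})$, both of which are $\geq \textrm{par-}\mu_{\textrm{min}}(\e)$, completing the induction.

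The main obstacle I foresee is purely book-keeping around the induced parabolic structures: one needs to check that the image $Q'_{\ast}$ and cokernel $Q''_{\ast}$, with the parabolic structures induced in the sense of \cite{Bi1,MY}, are well-behaved both for the additivity of parabolic degree along the short exact sequence and for applying the semistability inequality to the quotient $E^1_{\ast}\,\twoheadrightarrow\, Q'_{\ast}$. If the image $Q'_{\ast}$ happens to be only a parabolic subsheaf of $Q_{\ast}$ rather than a sub-bundle, the standard saturation trick applies (passing to the saturation can only increase the parabolic slope), so all the inequalities above retain the correct sign.
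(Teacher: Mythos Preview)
Your proof is correct and complete (the saturation caveat you flag is the only technical wrinkle, and your fix is the right one). It is, however, organized differently from the paper's argument. The paper does not induct on the length of the Harder--Narasimhan filtration of $\e$; instead it first reduces to the case where $Q_{\ast}$ is parabolic semistable by passing to the last Harder--Narasimhan quotient of $Q_{\ast}$, and then locates the smallest index $i_0$ for which the composite $E^{i_0}_{\ast}\hookrightarrow \e \twoheadrightarrow Q_{\ast}$ is nonzero. This produces a nonzero map $(E^{i_0}/E^{i_0-1})_{\ast}\to Q_{\ast}$ between parabolic semistable bundles, whence $\textrm{par-}\mu(Q_{\ast})\geq \textrm{par-}\mu((E^{i_0}/E^{i_0-1})_{\ast})\geq \textrm{par-}\mu_{\min}(\e)$.

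Your approach trades the ``nonzero map between semistables'' fact for the more elementary observation that slope is a convex combination along short exact sequences; the price is the induction and the saturation bookkeeping. The paper's route avoids the saturation issue entirely (no images or cokernels are formed inside $Q_{\ast}$) and is a line or two shorter, but uses the Harder--Narasimhan filtration of $Q_{\ast}$ as an auxiliary input. Both are standard; yours is arguably the more self-contained of the two.
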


\begin{proof}
The first step will be to show that it suffices to prove the lemma under the assumption that 
the quotient $Q_{\ast}$ is parabolic semistable.

If $Q_{\ast}$ is not parabolic semistable, then it has a parabolic semistable quotient $\widetilde{Q}_{\ast}$ such that
\begin{equation}\label{par1}
\textrm{par-}\mu(Q_{\ast})\ >\ \textrm{par-}\mu(\widetilde{Q}_{\ast}).
\end{equation}
Thus, if we prove the lemma for semistable quotients, i.e.,
\begin{equation}\label{par2}
\textrm{par-}\mu(\widetilde{Q}_{\ast})\ \geq\ \textrm{par-}\mu_{\textrm{min}}(\e),
\end{equation}
then from the combination of \eqref{par1} and \eqref{par2} it follows that $\textrm{par-}\mu(Q_{\ast})\, >\, 
\textrm{par-}\mu_{\textrm{min}}(\e)$. Hence it is enough to consider the case where $Q_{\ast}$ is parabolic semistable.

For any $1\, \leq\, i\, \leq\, m$, we have the composition of maps $E^i_{\ast}\, \hookrightarrow\, E_{\ast}
\, \longrightarrow\, Q_{\ast}$. Define
\begin{equation*}
i_{0}\ :=\ \textrm{min}\{i\,\,\big\vert\,\,\, \textrm{the above map}\,\,E^{i}_{\ast}\,\longrightarrow\,\, Q_{\ast}\, \textrm{is nonzero} \}.
\end{equation*}
Note that $m\, \geq\, i_0\, \geq\, 1$. Hence we have a nonzero morphism 
\begin{equation*}
\varphi\ :\ E^{i_{0}}_{\ast}\ \longrightarrow\ Q_{\ast}
\end{equation*}
such that $\varphi\big\vert_{E^{i_0-1}_{\ast}}\,=\, 0$. Consequently, $\varphi$ descends to a nonzero homomorphism 
\begin{equation}\label{a1}
\widetilde{\varphi}\ :\ E^{i_{0}}_{\ast}/E^{i_{0}-1}_{\ast}\ \longrightarrow\ Q_{\ast}.
\end{equation}

Since both $E^{i_{0}}_{\ast}/E^{i_{0}-1}_{\ast}$ and $Q_{\ast}$ are parabolic semistable, the existence of
the nonzero homomorphism $\widetilde{\varphi}$ in \eqref{a1} ensures that 
\begin{equation*}
\textrm{par-}\mu(E^{i_{0}}_{\ast}/E^{i_{0}-1}_{\ast})\ \leq\ \textrm{par-}\mu(Q_{\ast}).
\end{equation*}
This implies that $\text{par-}\mu(Q_{\ast})\,\geq\, \text{par-}{\mu}_{\rm min}(\e)$ because
$\textrm{par-}\mu(E^{i_{0}}_{\ast}/E^{i_{0}-1}_{\ast})\, \geq\, \text{par-}{\mu}_{\rm min}(\e)$.
\end{proof}

\begin{lemma}\label{cri1}
Let $\e$ be a parabolic vector bundle on $X$ with Harder–Narasimhan filtration as in \eqref{hn}.
Then the following two statements are equivalent:
\begin{itemize}
\item[$(1)$] The parabolic degree {\rm par-deg}$(\e)$ is strictly positive, and the parabolic degree of
any quotient vector bundle of $E$ with the induced parabolic structure is also strictly positive.

\item[$(2)$] ${\rm par\mbox{-}}\mu_{\emph{min}}(E_*)\ >\ 0$. 
\end{itemize}
\end{lemma}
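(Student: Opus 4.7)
The plan is to prove both implications directly, leveraging Lemma \ref{min} for one direction and the Harder--Narasimhan filtration itself for the other. The statement is essentially a re-packaging of what Lemma \ref{min} provides, combined with the observation that the last graded piece of the HN filtration realizes the minimum parabolic slope as a genuine quotient of $\e$.

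For the implication $(2)\Rightarrow(1)$, I would argue as follows. Suppose $\textrm{par-}\mu_{\textrm{min}}(E_*)>0$. Let $Q_*$ be any quotient of $E$ equipped with the induced parabolic structure; then $Q_*$ is a quotient parabolic vector bundle of $\e$, so Lemma \ref{min} gives $\textrm{par-}\mu(Q_*)\geq \textrm{par-}\mu_{\textrm{min}}(\e)>0$. Since $\mathrm{rank}(Q_*)$ is a positive integer, this yields $\textrm{par-deg}(Q_*)=\mathrm{rank}(Q_*)\cdot\textrm{par-}\mu(Q_*)>0$. Taking $Q_*=\e$ in particular gives $\textrm{par-deg}(\e)>0$, and the case of a general quotient is exactly the second assertion of (1).

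For the implication $(1)\Rightarrow(2)$, I would single out the last graded piece $(E^m/E^{m-1})_*$ of the Harder--Narasimhan filtration \eqref{hn}. Since $E^m=E$, the natural surjection $E\twoheadrightarrow E/E^{m-1}$ exhibits $(E^m/E^{m-1})_*$ as a quotient vector bundle of $E$ carrying the induced parabolic structure. By hypothesis (1), $\textrm{par-deg}((E^m/E^{m-1})_*)>0$, and dividing by the positive integer $\mathrm{rank}(E^m/E^{m-1})$ yields $\textrm{par-}\mu((E^m/E^{m-1})_*)>0$. By the very definition of $\textrm{par-}\mu_{\textrm{min}}(E_*)$ this is exactly $\textrm{par-}\mu_{\textrm{min}}(E_*)>0$, establishing (2).

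There is no serious obstacle: the only point that needs a moment of care is checking that a quotient of the underlying bundle $E$ endowed with the induced parabolic structure is indeed a quotient parabolic vector bundle of $\e$ in the sense used by Lemma \ref{min}, so that the lemma can be applied; this follows directly from the standard definition of the induced parabolic structure on a quotient.
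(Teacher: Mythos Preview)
Your proof is correct and follows essentially the same approach as the paper's own argument: for $(1)\Rightarrow(2)$ you both invoke the last graded piece $(E^m/E^{m-1})_*$ as a quotient of $\e$, and for $(2)\Rightarrow(1)$ you both apply Lemma~\ref{min} to an arbitrary quotient $Q_*$. The only cosmetic difference is that the paper derives $\textrm{par-deg}(\e)>0$ from the inequality $\textrm{par-}\mu(\e)\geq\textrm{par-}\mu_{\textrm{min}}(\e)$, whereas you obtain it by taking $Q_*=\e$; these are of course equivalent.
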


\begin{proof}
$(1) \implies (2)$.

By definition par-$\mu_{\textrm{min}}(\e)\,=\, \textrm{par-}\mu(E/E^{m-1})_{\ast}$, which is a quotient of
$E$ with the induced parabolic structure. Hence, by hypothesis $(1)$ we have par-$\mu_{\textrm{min}}(\e)\,>\,0$. 
 
 $(2)\implies (1)$.

Let $Q_{\ast}$ be any quotient of $E$ equipped with induced parabolic structure. By Lemma \ref{min}, 
\begin{equation}\label{e1}
\textrm{par-}\mu(Q_{\ast})\ \geq\ \textrm{par-}\mu_{\textrm{min}}(\e).
\end{equation}
By (2), par-$\mu_{\textrm{min}}(\e)\,>\,0$, and hence from \eqref{e1} it
follows that par-$\mu(Q_{\ast})\,>\, 0$. By definition 
\begin{equation*}
\textrm{par-}\mu(Q_{\ast})\ :=\ \textrm{par-deg}(Q_{\ast})/\textrm{rank}(Q_{\ast}),
\end{equation*}
this implies that par-deg$(Q_{\ast})\,>\,0$.

Since par-$\mu(\e)\,\geq \,\textrm{par-}\mu_{\textrm{min}}(\e)$,
it follows again that par-deg$(\e)\,>\,0$. This proves $(1)$. 
\end{proof}

We now give an equivalent condition for a parabolic vector bundle to be parabolic ample.

\begin{theorem}\label{cri2}
Let $\e$ be a parabolic vector bundle on $X$. Then the following three statements are equivalent: 
\begin{enumerate}
\item [$(1)$] For every vector bundle $V$ on $X$, there exists an integer $n_{0}$ depending on $V$ such that for
all $n\,\geq\,n_{0}$, the vector bundle $S^{n}(\e)_{0}\,\otimes_{\mathcal{O}_{X}}V$ is
generated by its global sections.

\item [$(2)$] The parabolic degree {\rm par-deg}$(\e)$ is strictly positive, and the parabolic degree of every 
quotient vector bundle of $E$, equipped with the induced parabolic structure, is also strictly positive.

\item [$(3)$] The inequality \emph{par-}$\mu_{\emph{min}}(\e)\,>\,0$ holds. 
\end{enumerate}
\end{theorem}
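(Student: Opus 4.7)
The equivalence $(2)\iff (3)$ is exactly Lemma \ref{cri1}. The substantive content is to establish $(1)\iff (3)$. For this I propose to use the covering construction that relates parabolic vector bundles on $(X,D)$ to ordinary equivariant vector bundles on a ramified Galois cover, and then to invoke Hartshorne's classical ampleness criterion on smooth projective curves.

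Specifically, assuming the parabolic weights have a common denominator $N$, one chooses a connected smooth projective Galois cover $p\colon Y\to X$ with group $\Gamma$, ramified exactly over $D$ with branching index $N$ (as constructed in \cite{Bi1}). Under the resulting equivalence of categories, $\e$ corresponds to a $\Gamma$-equivariant vector bundle $F$ on $Y$. Uniqueness of the Harder--Narasimhan filtration forces that of $F$ to be $\Gamma$-equivariant, and it matches the parabolic Harder--Narasimhan filtration of $\e$ under the equivalence, so par-$\mu_{\min}(\e)>0$ is equivalent to $\mu_{\min}(F)>0$. By Hartshorne's theorem on curves, $\mu_{\min}(F)>0$ is equivalent to $F$ being ample. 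Thus condition $(3)$ translates to ``$F$ is ample on $Y$''.

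It therefore remains to show that $F$ is ample on $Y$ if and only if condition $(1)$ holds for $\e$. This is essentially the content of the main result of \cite{Bi2}, where parabolic ampleness was introduced: for $n$ divisible by $N$ one has $p^{\ast}S^{n}(\e)_{0}\cong S^{n}(F)$ as $\Gamma$-equivariant bundles, so the Hartshorne global-generation criterion applied to $F$ on $Y$ matches condition $(1)$ for $\e$ on $X$ (after appropriate $\Gamma$-invariant pushforward using the projection formula, and after handling values of $n$ not divisible by $N$ by tensoring with $\e_{0}^{\otimes k}$ for $0\leq k<N$ and using that global generation is preserved under tensoring with globally generated bundles).

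The main obstacle I anticipate is the precise bookkeeping in the step ``(1) if and only if $F$ is ample'': one must check that global generation of $S^{n}(F)\otimes p^{\ast}V$ for all vector bundles $V$ on $X$ and all sufficiently large $n$ implies Hartshorne's criterion for $F$ in its full generality (i.e.\ global generation of $S^{m}(F)\otimes W$ for arbitrary $W$ on $Y$), and conversely. The former direction uses that every line bundle on $Y$ has a power which is pulled back from $X$ (via the norm construction), so checking against pullbacks $p^{\ast}V$ is enough to force ampleness of $F$. These compatibilities are already implicit in the framework of \cite{Bi1} and \cite{Bi2}, so the proof is largely a matter of organizing the translation carefully.
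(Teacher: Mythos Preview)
Your route and the paper's diverge in packaging but not in substance. The paper's proof is two lines: $(1)\iff(2)$ is quoted directly from \cite[Theorem~3.1]{Bi2}, and $(2)\iff(3)$ is Lemma~\ref{cri1}. You also invoke Lemma~\ref{cri1} for $(2)\iff(3)$, but for $(1)\iff(3)$ you pass through the orbifold cover $p\colon Y\to X$, translate to the $\Gamma$-equivariant bundle $F$, and use Hartshorne's slope criterion on $Y$. Since you yourself concede that the step ``$F$ ample $\iff$ condition $(1)$'' is essentially the main result of \cite{Bi2}, the covering detour does not eliminate the external input; it only reorganizes where the citation is placed.

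There is, however, a genuine error in the one place where you try to argue beyond the citation. You claim that ``every line bundle on $Y$ has a power which is pulled back from $X$ (via the norm construction)'', and use this to reduce Hartshorne's test against arbitrary bundles on $Y$ to tests against pullbacks $p^{\ast}V$. This is false: for a double cover $p\colon Y\to\mathbb{P}^{1}$ with $Y$ an elliptic curve, a line bundle $L\in\mathrm{Pic}^{0}(Y)$ corresponding to a non-torsion point has $L^{k}\not\cong\mathcal{O}_{Y}$ for every $k\geq 1$, while the only degree-zero element of $p^{\ast}\mathrm{Pic}(\mathbb{P}^{1})$ is $\mathcal{O}_{Y}$. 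The norm map only gives $p^{\ast}\mathrm{Nm}(L)\cong\bigotimes_{\gamma\in\Gamma}\gamma^{\ast}L$, which equals $L^{|\Gamma|}$ precisely when $L$ is $\Gamma$-invariant. So your proposed mechanism for $(1)\Rightarrow\text{``$F$ ample''}$ does not work. The clean way to close that direction on a curve is through slopes rather than global generation, but $(1)\Rightarrow\mu_{\min}(F)>0$ is exactly $(1)\Rightarrow(2)$ (via the Harder--Narasimhan correspondence you already set up) followed by Lemma~\ref{cri1}; this lands you back on \cite[Theorem~3.1]{Bi2}, which is where the paper starts.
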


\begin{proof}
The equivalence $(1) \iff (2)$ follows by \cite[Theorem 3.1]{Bi2}. The equivalence $(2)\iff (3)$ follows from
Lemma \ref{cri1}.
\end{proof}

\begin{definition}\label{ample}
A parabolic vector bundle $\e$ is said to be parabolic ample if it satisfies any one of the conditions in
Theorem~\ref{cri2}.
\end{definition}

Next, we define the parabolic analogue of nef vector bundles; see \cite{BS} for more details.

\begin{definition}\label{den}
A parabolic vector bundle $\e$ is called \textrm{parabolic nef} if there is an ample line bundle $L$ over $X$
such that $S^{k}(\e)\,\otimes\,L$ is parabolic ample for every $k$, where $S^{k}(\e)$ denote the $k$-fold
parabolic symmetric tensor power of the parabolic vector bundle $\e$. Here, $L$ is equipped with the trivial
parabolic structure, meaning it has no nonzero parabolic weights.
\end{definition}

We now give a criterion for parabolic vector bundle to be parabolic nef.

\begin{proposition}\label{p}
A parabolic vector bundle $\e$ on a smooth complex projective curve $X$ is parabolic nef if and only if, inequality 
\begin{equation*}
{\rm par}\textrm{-}\mu_\emph{min}(E_{\ast})\ \geq\ 0
\end{equation*}
holds.
\end{proposition}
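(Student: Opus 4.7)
The plan is to use Theorem~\ref{cri2} to translate parabolic ampleness of $S^{k}(\e)\otimes L$ into a statement about $\text{par-}\mu_{\min}$, and then to read off what this forces on $\text{par-}\mu_{\min}(\e)$ itself.

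The first and main step I would carry out is the slope identity
\begin{equation*}
\text{par-}\mu_{\min}\bigl(S^{k}(\e)\otimes L\bigr)\ =\ k\cdot\text{par-}\mu_{\min}(\e)\ +\ \deg(L)
\end{equation*}
for every line bundle $L$ carrying the trivial parabolic structure and every integer $k\geq 0$. Tensoring by such an $L$ just shifts parabolic slopes by $\deg(L)$, so the substance is the formula $\text{par-}\mu_{\min}(S^{k}(\e))=k\cdot\text{par-}\mu_{\min}(\e)$. To prove this I would apply $S^{k}$ to the Harder–Narasimhan filtration \eqref{hn}, producing a filtration of $S^{k}(\e)$ whose associated graded pieces are tensor products $S^{a_{1}}(E^{1}/E^{0})_{\ast}\otimes\cdots\otimes S^{a_{m}}(E^{m}/E^{m-1})_{\ast}$ with $\sum a_{i}=k$. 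In characteristic zero parabolic semistability is preserved by tensor products and hence by symmetric powers, so each graded piece is parabolic semistable with slope $\sum_{i}a_{i}\,\text{par-}\mu((E^{i}/E^{i-1})_{\ast})$. The minimum over all such tuples is attained at $a_{m}=k$, giving $k\cdot\text{par-}\mu_{\min}(\e)$; and since $S^{k}(E^{m}/E^{m-1})_{\ast}$ is a genuine parabolic quotient of $S^{k}(\e)$, Lemma~\ref{min} supplies the matching reverse inequality and forces equality.

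With this identity in hand both implications become short. For the ``if'' direction, assuming $\text{par-}\mu_{\min}(\e)\geq 0$ and picking any ample line bundle $L$, the identity gives $\text{par-}\mu_{\min}(S^{k}(\e)\otimes L)\geq\deg(L)>0$ for every $k$, so Theorem~\ref{cri2} makes each $S^{k}(\e)\otimes L$ parabolic ample and Definition~\ref{den} declares $\e$ parabolic nef. For the converse, let $L$ be the ample line bundle provided by the definition of parabolic nefness; Theorem~\ref{cri2} then forces $k\cdot\text{par-}\mu_{\min}(\e)+\deg(L)>0$ for every $k\geq 1$, and dividing by $k$ and letting $k\to\infty$ yields $\text{par-}\mu_{\min}(\e)\geq 0$.

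The main obstacle I anticipate is precisely the slope identity for $S^{k}(\e)$, which hinges on the preservation of parabolic semistability under tensor products in characteristic zero -- a result I would cite from the standard references on parabolic bundles. Once that input is granted, the proposition reduces to combining it with Theorem~\ref{cri2}, Lemma~\ref{min} and an elementary limit in $k$.
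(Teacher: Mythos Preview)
Your proposal is correct and follows essentially the same route as the paper: both directions hinge on the identity $\text{par-}\mu_{\min}(S^{k}(\e)\otimes L)=k\cdot\text{par-}\mu_{\min}(\e)+\deg(L)$ together with Theorem~\ref{cri2}. The only notable difference is that you justify this identity in detail via the Harder--Narasimhan filtration and preservation of parabolic semistability under tensor products, whereas the paper simply asserts it; for the converse you take a limit in $k$ while the paper argues by contradiction with large $k$, but these are the same argument.
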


\begin{proof}
Assume, we have
\begin{equation}\label{degree}
\textrm{par-}\mu_{\textrm{min}}(\e)\ \geq\ 0. 
\end{equation}
To prove that $\e$ is parabolic nef, take an ample line bundle $L$ on $X$ with trivial parabolic weights. 
Our goal is to prove that for every positive integer $k$, the parabolic vector bundle $S^{k}(\e)\,\otimes\,L$
is parabolic ample.

From the definition of parabolic tensor product it is easy to see that for any two parabolic vector
bundles $V_{\ast}$ and $W_{\ast}$ on the curve $X$,
\begin{equation}\label{tensor}
\textrm{par-deg}(V_{\ast}\,\otimes\,W_{\ast})\,=\, \textrm{rank}(V_{\ast}).\textrm{par-deg}\,W_{\ast}\,+\,
\textrm{rank}(W_{\ast}).\textrm{par-deg}\,V_{\ast}.
\end{equation}
Take $L$ as in Definition \ref{den}. Since $L$ is a parabolic ample line bundle with trivial parabolic weights,
\begin{equation}\label{j1}
\textrm{par-deg}(L)\ =\ \textrm{deg}(L)\ >\ 0.
\end{equation}
Using this and \eqref{j1} in \eqref{tensor} we conclude that
\begin{equation*}
\textrm{par-deg}(S^{k}(\e)\,\otimes\,L)\ \geq\ 0.
\end{equation*}

Next, we will show that every parabolic quotient of $S^{k}(\e)\,\otimes\, L$ has strictly positive parabolic
degree. Since par-$\mu_{\textrm{min}}(\e)\,\geq\,0$, it follows that
\begin{equation}\label{j2}
\textrm{par-}d_{\textrm{min}}S^{k}(\e)
\ =\ k\cdot\textrm{par-}d_{\textrm{min}}(\e)\ \geq\ 0 .
\end{equation}

A consequence of \eqref{j2} is that any quotient parabolic vector bundle of $S^{k}(\e)$
is of nonnegative parabolic degree (see Lemma \ref{min}). Hence any parabolic quotient of 
$S^{k}(\e)\,\otimes\,L$ is of strictly positive degree. Therefore Theorem \ref{cri2} says that 
$S^{k}(\e)\,\otimes\,L$ is parabolic ample. Thus $\e$ is parabolic nef.

To prove the converse, assume that $\e$ is parabolic nef. We will show that
\begin{equation*}
\textrm{par-}\mu_{\textrm{min}}(\e)\ \geq\ 0.
\end{equation*}
Since 
$\e$ is parabolic nef, there exists an ample line bundle 
$L$ on $X$ such that, for any $k\, \geq\, 1$,
\begin{equation*}
\textrm{par-}\mu_{\textrm{min}}(S^{k}(\e)\,\otimes\,L)\ =\
\textrm{par-}\mu_{\textrm{min}}(S^{k}(\e))\,+\, \textrm{par-}\mu_{\textrm{min}}(L)
\end{equation*}
\begin{equation*}
\hspace{3cm} =\ k.\textrm{par-}\mu_{\textrm{min}}(\e)\,+\,\textrm{deg}(L).
\end{equation*}
Since $\e$ is parabolic nef, we have
\begin{equation}\label{positive}
\textrm{par-}\mu_{\textrm{min}}(S^{k}(\e)\,\otimes\,L)\,>\,0 
\end{equation}
for all $k$. If $\textrm{par-}\mu_{\textrm{min}}(\e)\,<\,0$, then, as $L$ is ample and hence deg$\,L\,>\,0$,
we can choose $k$ large enough so that 
$\textrm{par-}\mu_{\textrm{min}}(S^{k}(\e)\,\otimes\,L)\,<\,0$. But this
contradicts \eqref{positive}. Hence $\textrm{par-}\mu_{\textrm{min}}(\e)\,\geq\,0$.
\end{proof}

\section{Pullback of parabolic ample and parabolic nef vector bundles}

Let $\theta\,:\, Z_{1}\,\longrightarrow\, Z_{2}$ be a non-constant morphism between smooth complex
projective curves, and let $F_{\ast}$ be a parabolic vector bundle on $Z_{2}$. Then the pulled back
parabolic vector bundle $\theta^{\ast}F_{\ast}$ on $Z_{1}$ carries a natural parabolic structure;
see \cite[Section $3$]{AB}. The parabolic divisor for $\theta^{\ast}F_{\ast}$ is the reduced inverse image
\begin{equation*}
\theta^{-1}(D)_{\textrm{red}},
\end{equation*}
where $D$ is the parabolic divisor for $F_{\ast}$.

\begin{theorem}\label{pull}
Let $X$ and $Y$ be smooth irreducible projective curves and $f\,:\, Y \,\longrightarrow\, X$ a
non-constant morphism. Then a parabolic vector bundle $\e$ on $X$ is parabolic
ample (respectively, parabolic nef) if and only if
$f^{\ast}\e$ on $Y$ is parabolic ample (respectively, parabolic nef).
\end{theorem}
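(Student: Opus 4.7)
The plan is to reduce the theorem to a single identity relating the minimal parabolic slopes under pullback, namely
\begin{equation*}
\textrm{par-}\mu_{\textrm{min}}(f^{\ast}\e) \,=\, (\deg f)\cdot \textrm{par-}\mu_{\textrm{min}}(\e).
\end{equation*}
Since $f$ is a non-constant morphism between smooth projective curves, it is automatically finite and surjective with $\deg f \,\geq\, 1$. Consequently, both the strict positivity and the non-negativity of $\textrm{par-}\mu_{\textrm{min}}$ are preserved in both directions under $\e \,\mapsto\, f^{\ast}\e$. The parabolic ample case then follows from Theorem \ref{cri2}$(3)$ and the parabolic nef case from Proposition \ref{p}, simultaneously and in both directions. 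Thus the entire theorem reduces to the displayed identity.

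I would establish the identity in two steps. First, I would verify the multiplicativity of parabolic degree under pullback: $\textrm{par-deg}(f^{\ast}V_{\ast}) \,=\, (\deg f)\cdot \textrm{par-deg}(V_{\ast})$ for every parabolic bundle $V_{\ast}$ on $X$. This is a direct computation from the pullback construction recalled just before the theorem statement, using $\deg(f^{\ast}V) = (\deg f)\cdot\deg(V)$ for the underlying bundles, together with the relation $\sum_{y \in f^{-1}(x_i)} e_y \,=\, \deg f$, where $e_y$ denotes the ramification index of $f$ at $y$, for the transformation of the parabolic weight contribution. Second, I would show that parabolic semistability is preserved by $f^{\ast}$, so that pulling back the Harder--Narasimhan filtration \eqref{hn} of $\e$ yields the Harder--Narasimhan filtration of $f^{\ast}\e$. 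Combined with the first step, every successive quotient of the HN filtration has its parabolic slope scaled uniformly by $\deg f$; applying this to the last quotient yields the claimed identity.

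The main obstacle is the second step, the preservation of parabolic semistability under finite pullback. The non-parabolic version of this assertion is a classical theorem for finite morphisms between smooth projective curves. For the parabolic version, one approach is to invoke the Biswas--Seshadri correspondence, which realizes parabolic bundles with rational weights of a common denominator as equivariant bundles on a suitable ramified Galois cover of $X$; under this correspondence, parabolic $f^{\ast}$ becomes equivariant pullback along an induced morphism of the covering curves, to which the classical argument applies verbatim. Alternatively, one can argue directly by contradiction using Lemma \ref{min}: if $f^{\ast}\e$ admitted a parabolic quotient of slope strictly less than $(\deg f)\cdot\textrm{par-}\mu_{\textrm{min}}(\e)$, then a suitable Galois-averaging or norm construction along $f$ would produce a parabolic quotient of $\e$ of slope strictly less than $\textrm{par-}\mu_{\textrm{min}}(\e)$, contradicting Lemma \ref{min}. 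Once the identity is established, both the ample and the nef statements drop out at once.
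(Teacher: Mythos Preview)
Your proposal is correct and follows essentially the same route as the paper: reduce to the identity $\textrm{par-}\mu_{\textrm{min}}(f^{\ast}\e) = (\deg f)\cdot\textrm{par-}\mu_{\textrm{min}}(\e)$ via the fact that $f^{\ast}$ takes the Harder--Narasimhan filtration of $\e$ to that of $f^{\ast}\e$, and then invoke Theorem~\ref{cri2}(3) and Proposition~\ref{p}. The only difference is that the paper simply cites \cite[Lemma~3.5(2)]{AB} for the preservation of the HN filtration under pullback, whereas you sketch two ways to prove this directly.
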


\begin{proof} It is known (see \cite[Lemma 3.5(2)]{AB}) that the pullback of the Harder-Narasimhan filtration of $\e$
\begin{equation}
0\,=\, f^{\ast}E^{0}\,\subset\, f^{\ast}E^{1}\,\subset\,\dots\,\subset f^{\ast}E^{m}\,=\, f^{\ast}E
\end{equation} 
coincides with the Harder-Narasimhan filtration of $f^{\ast}\e$.
Therefore, the minimal parabolic slope of $f^{\ast}\e$ is
\begin{equation}\label{rel}
\textrm{par-}\mu_{\textrm{min}}(f^{\ast}\e)\ =\ \textrm{par-}\mu(f^{\ast}(E/E^{m-1})_{\ast})\ =\
\textrm{degree}(f)\cdot \textrm{par-}\mu_{\textrm{min}}(\e).
\end{equation}
The last equality follows due to the fact that 
\begin{equation*}
\textrm{par-deg}(f^{\ast}(E/E^{m-1})_{\ast})\,=\, \textrm{degree}(f)\cdot \textrm{par-deg}(E/E^{m-1})_{\ast}.
\end{equation*}
By \eqref{rel} it is clear that 
\begin{equation*}
\textrm{par-}\mu_{\textrm{min}}(f^{\ast}\e)\,>\,0 \, \iff \textrm{par-}\mu_{\textrm{min}}(\e)\,>\,0,
\end{equation*}
$(\textrm{respectively, par-}\mu_{\textrm{min}}(f^{\ast}\e)\,\geq\,0 \iff \textrm{par-}\mu_{\textrm{min}}(\e)\,\geq\,0)$. Hence, by Theorem \ref{cri2} (respectively, Proposition \ref{p}) $\e$ is parabolic ample (respectively, parabolic nef) if and only if $f^{\ast}\e$ is parabolic ample (respectively, parabolic nef). 
\end{proof}

\begin{definition}
A parabolic vector bundle $\e$ on a complex projective variety is said to be parabolic anti-ample
(respectively, parabolic anti-nef) if the dual parabolic vector bundle $\e^{\vee}$ is parabolic ample
(respectively, parabolic nef).
\end{definition}

\begin{corollary}
Let $X$ and $Y$ be smooth irreducible projective curves and $f\,:\, Y \,\longrightarrow\, X$ a
non-constant morphism. Then a parabolic vector bundle $\e$ on $X$ is parabolic anti-ample (respectively, parabolic anti-nef) if and only if
$f^{\ast}\e$ on $Y$ is parabolic anti-ample (respectively, parabolic anti-nef).
\end{corollary}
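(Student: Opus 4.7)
The plan is to reduce the corollary directly to Theorem \ref{pull} by exploiting the compatibility between pullback and dualization in the parabolic category. Specifically, for a non-constant morphism $f\,:\,Y\,\longrightarrow\,X$ between smooth projective curves and a parabolic vector bundle $\e$ on $X$, I would use the identification
\begin{equation*}
(f^{\ast}\e)^{\vee}\ \cong\ f^{\ast}(\e^{\vee})
\end{equation*}
of parabolic vector bundles on $Y$. This is a standard fact in the parabolic setting, following from the construction of the parabolic dual and the construction of the parabolic pullback described in \cite[Section 3]{AB}; both operations can be phrased in terms of the associated orbifold/Kawamata cover vector bundles, and the classical (non-parabolic) identity $(f^{\ast}V)^{\vee}\cong f^{\ast}(V^{\vee})$ on the cover descends.

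Granted this identification, the argument is immediate. By the definition of parabolic anti-ample, $\e$ is parabolic anti-ample if and only if $\e^{\vee}$ is parabolic ample. Applying Theorem \ref{pull} to $\e^{\vee}$, this holds if and only if $f^{\ast}(\e^{\vee})$ is parabolic ample. Using the compatibility above, $f^{\ast}(\e^{\vee})\cong(f^{\ast}\e)^{\vee}$, so this is equivalent to $(f^{\ast}\e)^{\vee}$ being parabolic ample, i.e., $f^{\ast}\e$ being parabolic anti-ample. The exact same chain of equivalences, with ``ample'' replaced throughout by ``nef'', handles the parabolic anti-nef case, invoking the nef half of Theorem \ref{pull}.

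The only real point to justify carefully is the identity $(f^{\ast}\e)^{\vee}\,\cong\, f^{\ast}(\e^{\vee})$, since one must check that the parabolic weights on both sides match. Concretely, both sides carry weighted filtrations at the points of $f^{-1}(D)_{\mathrm{red}}$, and one verifies that the weights obtained by first dualizing (which replaces a weight $\alpha$ by $1-\alpha$, with suitable renormalization) and then pulling back (which preserves weights, up to the reduced-inverse-image convention of \cite[Section 3]{AB}) agree with those obtained by pulling back first and then dualizing. I expect this to be the main, though essentially bookkeeping, obstacle; once it is recorded, the corollary follows in two lines from Theorem \ref{pull}.
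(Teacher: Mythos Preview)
Your proposal is correct and follows essentially the same route as the paper: reduce to Theorem \ref{pull} via the definition of anti-ample/anti-nef and the compatibility $(f^{\ast}\e)^{\vee}\cong f^{\ast}(\e^{\vee})$. The paper simply cites \cite[Remark 3.4]{AB} for that compatibility rather than sketching its verification, so your extra discussion of the weight bookkeeping is unnecessary here.
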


\begin{proof}
The parabolic vector bundle $\e$ is parabolic anti-ample (respectively, anti-nef) if and only if the parabolic dual 
$E_{\ast}^{\vee}$ is parabolic ample (respectively, parabolic nef) (by definition). By Theorem \ref{pull}, the 
parabolic dual $E_{\ast}^{\vee}$ is parabolic ample (respectively, parabolic nef) if and only if 
$f^{\ast}(\e^{\vee})$ is parabolic ample (respectively, parabolic nef). On the other hand, by \cite[Remark 
3.4]{AB}, the operations of pullback and dualization commute, so $f^{\ast}(\e^{\vee})$ is the same as 
$(f^{\ast}\e)^{\vee}$. Therefore, $f^{\ast}(\e^{\vee})$ is parabolic ample (respectively, parabolic nef) if 
and only if $(f^{\ast}\e)^{\vee}$ is parabolic ample (respectively, parabolic nef) . But $(f^{\ast}\e)^{\vee}$ 
is parabolic ample (respectively, parabolic nef) if and only if $f^{\ast}\e$ is anti-ample (respectively, 
anti-nef).
\end{proof}

\section{Direct image of parabolic ample and parabolic nef vector bundles }

Let $\phi\,:\, Z_{1}\,\longrightarrow\, Z_{2}$ be a non-constant morphism between smooth complex
projective curves, and let $\e$ be a parabolic vector bundle on $Z_{1}$ with parabolic divisor $D$. Let
\begin{equation*}
R \ \subset \ X
\end{equation*}
be the ramification locus of $\phi$. Define a subset 
\begin{equation*}
\Delta\,=\, \phi\,(R\,\cup\, D)\,\subset\,Z_{2}.
\end{equation*}
A natural parabolic
structure on the direct image $\phi_{\ast}\e$, with parabolic divisor is $\Delta$, was constructed in \cite[Section 4]{AB}. 

\begin{theorem}\label{push}
Let $X$ and $Y$ be an irreducible smooth complex projective curves, and let $f \,:\, Y \,\longrightarrow\, X$
be a non-constant morphism. Then a parabolic vector bundle $\V$ on $Y$ is parabolic ample (respectively, parabolic nef) if and only if its pushforward $f_{\ast}\V$ on $X$ is parabolic ample (respectively, parabolic nef).
\end{theorem}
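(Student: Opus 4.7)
The plan is to mirror the proof of Theorem \ref{pull} by establishing a direct image analogue of the slope identity \eqref{rel}, namely
\[
\textrm{par-}\mu_{\textrm{min}}(f_{\ast}V_{\ast}) \;=\; \frac{\textrm{par-}\mu_{\textrm{min}}(V_{\ast})}{\textrm{degree}(f)}.
\]
Since $\textrm{degree}(f)>0$, this immediately gives $\textrm{par-}\mu_{\textrm{min}}(f_{\ast}V_{\ast})>0$ if and only if $\textrm{par-}\mu_{\textrm{min}}(V_{\ast})>0$, and the analogous equivalence holds with $>0$ replaced by $\geq 0$; invoking Theorem \ref{cri2} for the ample statement and Proposition \ref{p} for the nef statement then finishes the argument.

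To establish this slope identity I would first record the per-slope formula $\textrm{par-}\mu(f_{\ast}W_{\ast})=\textrm{par-}\mu(W_{\ast})/\textrm{degree}(f)$ for any parabolic bundle $W_{\ast}$ on $Y$. This reduces to the identity $\textrm{par-deg}(f_{\ast}W_{\ast})=\textrm{par-deg}(W_{\ast})$, which is a built-in feature of the parabolic direct image construction of \cite[Section 4]{AB} (designed precisely to absorb the ramification correction present in the classical formula for $\deg(f_{\ast}W)$), combined with $\textrm{rank}(f_{\ast}W)=\textrm{degree}(f)\cdot\textrm{rank}(W)$. Next, I would show that applying $f_{\ast}$ to the Harder--Narasimhan filtration of $V_{\ast}$ yields the Harder--Narasimhan filtration of $f_{\ast}V_{\ast}$: the subquotients $f_{\ast}(V^{i+1}_{\ast}/V^{i}_{\ast})$ automatically have strictly decreasing parabolic slopes by the per-slope identity together with \eqref{par}, so the only nontrivial point is that each such subquotient is parabolic semistable. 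Granting this, applying the per-slope identity to the bottom quotient of the filtration yields the formula for $\textrm{par-}\mu_{\textrm{min}}$.

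The main obstacle is the preservation of parabolic semistability under $f_{\ast}$. My approach would be to pass to a Galois closure $\pi\colon Z\longrightarrow X$ of $f$, factoring as $\pi=f\circ h$ with $h\colon Z\longrightarrow Y$. If $W_{\ast}$ is parabolic semistable on $Y$, then $h^{\ast}W_{\ast}$ is parabolic semistable on $Z$, since the Harder--Narasimhan filtration commutes with pullback by \cite[Lemma 3.5(2)]{AB}. A parabolic flat base change identity applied to the Galois cover $\pi$ decomposes $\pi^{\ast}f_{\ast}W_{\ast}$ as a direct sum of Galois translates of $h^{\ast}W_{\ast}$, each parabolic semistable of the same parabolic slope, which forces $\pi^{\ast}f_{\ast}W_{\ast}$ to be parabolic semistable as well. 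Since the Harder--Narasimhan filtration is both preserved and reflected under the pullback $\pi^{\ast}$, it follows that $f_{\ast}W_{\ast}$ is parabolic semistable. The most delicate step is verifying this flat base change identity in the parabolic category, which amounts to careful tracking of parabolic weights at the ramification points of both $f$ and $\pi$.
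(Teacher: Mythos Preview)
Your strategy is sound and would succeed, but it is more circuitous than the paper's. Both approaches rest on the same technical core: the decomposition
\[
h^{\ast}(f_{\ast}V_{\ast})\ \cong\ \bigoplus_{\gamma\in\widetilde{\Gamma}}\gamma^{\ast}g^{\ast}V_{\ast}
\]
over the Galois closure $h\colon Z\to X$ of $f$ (with $g\colon Z\to Y$ and $\widetilde{\Gamma}\subset\Gamma=\mathrm{Gal}(h)$ a set of coset representatives for $\Gamma/\mathrm{Gal}(g)$). This is precisely the ``parabolic flat base change identity'' you flag as the delicate step, and the paper devotes most of its proof to establishing it (via the identification $h^{\ast}h_{\ast}g^{\ast}V_{\ast}=\bigoplus_{\gamma\in\Gamma}\gamma^{\ast}g^{\ast}V_{\ast}$ from \cite[Proposition 4.2(2)]{AB} together with an explicit description of the $G$--invariants).

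The difference is what each of you does with this decomposition. The paper applies it \emph{once}, directly to $V_{\ast}$: since each $\gamma^{\ast}g^{\ast}V_{\ast}$ is parabolic ample (resp.\ nef) by Theorem \ref{pull}, so is the direct sum, hence so is $h^{\ast}(f_{\ast}V_{\ast})$, and Theorem \ref{pull} again gives the conclusion for $f_{\ast}V_{\ast}$; the converse runs the same logic backwards via direct summands. You instead use the decomposition to prove that $f_{\ast}$ preserves parabolic semistability, then lift this to preservation of the entire Harder--Narasimhan filtration, and finally read off the slope identity $\textrm{par-}\mu_{\min}(f_{\ast}V_{\ast})=\textrm{par-}\mu_{\min}(V_{\ast})/\deg(f)$. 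Your route yields strictly more information (the exact formula and HN--preservation), but it also requires two auxiliary facts you pass over quickly: that $f_{\ast}$ is exact on parabolic bundles (so that $f_{\ast}(V^{i+1}_{\ast}/V^{i}_{\ast})$ really is the $i$-th subquotient of the pushed-forward filtration), and that $\textrm{par-deg}(f_{\ast}W_{\ast})=\textrm{par-deg}(W_{\ast})$. The latter is not stated in \cite{AB} as a ``built-in feature''; however, it \emph{follows} from the decomposition above by a rank/degree count, so you could simply derive it there rather than asserting it up front. The paper's approach sidesteps both of these checks entirely.
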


\begin{proof}
Consider the Galois closure
\begin{equation}\label{h}
h\ :\ Z\ \longrightarrow\ X
\end{equation}
of the above map $f$. Let
\begin{equation}\label{Aut}
\Gamma\ :=\ \text{Gal}(h)\ =\ \text{Aut}(Z/X)
\end{equation}
be the Galois group of this ramified covering $h$. There is a natural map
\begin{equation}\label{g}
g\ :\ Z\ \longrightarrow\ Y
\end{equation}
such that $h$ factors as
\begin{equation*}
f\,\circ\, g\ =\ h.
\end{equation*}

First assume that $\V$ is parabolic ample (respectively, parabolic nef). To prove that
$f_{\ast}\V$ is parabolic ample (respectively, parabolic nef), it is enough to show that
$h^{\ast}(f_{\ast}\V)$ is parabolic ample (respectively, parabolic nef) (see Theorem \ref{pull}).

Since $h$ is a (ramified) Galois morphism, by \cite[Proposition 4.2(2)]{AB}, we have
\begin{equation}\label{sum}
h^{\ast}h_{\ast} g^{\ast}\V \ =\ \bigoplus_{\gamma\,\in\, \Gamma} \gamma^{\ast}g^{\ast}V_{\ast},
\end{equation}
where $\Gamma$ is the Galois group in \eqref{Aut}. As $f\,\circ\,g\,=\,h$, the
parabolic vector bundle $h^{\ast}(f_{\ast}V_{\ast})$ is a parabolic
sub-bundle of the parabolic vector bundle $h^{\ast}h_{\ast}g^{\ast}V_{\ast}$. Consequently, by \eqref{sum},
we have 
\begin{equation}\label{sub}
h^{\ast}(f_{\ast}V_{\ast})\ \subset\ \bigoplus_{\gamma\,\in\, \Gamma} \gamma^{\ast}g^{\ast}V_{\ast}
\end{equation}
as a parabolic sub-bundle. Now we will explicitly describe the parabolic sub-bundle in \eqref{sub}.

Let
\begin{equation*}
G\ :=\ \textrm{Gal}(g)\,=\, \textrm{Aut}(Z/Y) 
\end{equation*}
be the Galois group of the map $g$ in \eqref{g}. So $G$ is a subgroup of the group $\Gamma$
defined in \eqref{Aut}, and $Y\,=\, Z/G$. Note that $G$ is a normal subgroup of $\Gamma$ if and only if
the map $f$ is (ramified) Galois. There is a natural action of $G$ on $g^{\ast}V_{\ast}$ over the
Galois action of $G$ on $Z$. Take any $w\,\in\, (g^{\ast}V_{\ast})_{z}$, $z\,\in\, Z$, and any $t\,\in\,G$. The point of $(g^{\ast}V_{\ast})_{(t(z))}$ to which $w$ is taken by the action of $t$ will be denoted
by $t\cdot w$.

The action of $G$ on $g^{\ast}V_{\ast}$ (over the action of $G$ on $Z$) produces
an action of $G$ on 
\begin{equation}\label{V}
\mathcal{V}_{\ast} \,:=\, \bigoplus_{\gamma\in \Gamma}\gamma^{\ast}g^{\ast}V_{\ast}
\end{equation}
over the trivial action of $G$ on $Z$. We will explicitly describe this action of $G$ on the parabolic
vector bundle $\mathcal{V}_{\ast}$ in \eqref{V}.

Take any point $z\,\in\, Z$. The fiber of the parabolic vector bundle $\mathcal{V}_{\ast}$ over $z$ is 
\begin{equation*}
(\mathcal{V}_{\ast}){_{z}}\ =\ \bigoplus_{\gamma\in\Gamma} (g^{\ast}V_{\ast})_{\gamma(z)}.
\end{equation*}
Take any element $\bigoplus_{\gamma\in \Gamma}w_{\gamma}\,\in\, \bigoplus_{\gamma\in \Gamma}(g^{\ast}V_{\ast})_{\gamma(z)}$,
where $w_{\gamma}\,\in\, (g^{\ast}V_{\ast})_{\gamma(z)}\,=\, (V_{\ast}){_{g(\gamma(z))}}$. The action of any $t\,\in\, G$ sends 
$\bigoplus_{\gamma\in \Gamma}w_{\gamma}$ to $\bigoplus_{\gamma\in \gamma} t\cdot w_{t^{-1}\gamma}$. The parabolic
sub-bundle in \eqref{sub} has the following description:
\begin{equation}\label{invariant}
h^{\ast}(f_{\ast}V_{\ast})\,=\, \mathcal{V}_{\ast}^{G}\,\subset\, \mathcal{V}_{\ast}\,=\, \bigoplus_{\gamma\in\Gamma}\gamma^{\ast}g^{\ast}V_{\ast},
\end{equation}
where $\mathcal{V}_{\ast}^{G}$ is the invariant parabolic sub-bundle for the above action $G$ on $\mathcal{V}_{\ast}$.

Now we will give a more detailed description of $h^{\ast}(f_{\ast}V_{\ast})$. Fix a subset $\widetilde{\Gamma}
\,\subset\, \Gamma$ such that the following composition of maps is a bijection:
\begin{equation}\label{subset}
\widetilde{\Gamma}\ \hookrightarrow\ \Gamma\ \longrightarrow\ \Gamma/G,
\end{equation}
where $\Gamma\,\longrightarrow\,\Gamma/G$ is the quotient map to the right quotient space $\Gamma/G$
(as mentioned before, in general $G$ is not a normal subgroup of $\Gamma$), and 
\begin{equation*}
\widetilde{\Gamma}\,\cap\, G\,=\, \{e\}\,\,\textrm{(the identity element of}\, \Gamma).
\end{equation*}
From \eqref{invariant} it follows that the parabolic sub-bundle $h^{\ast}(f_{\ast}V_{\ast})\,\subset\,
\bigoplus_{\gamma\in\Gamma}\gamma^{\ast}g^{\ast}V_{\ast}$ is isomorphic to the direct sum
$\bigoplus_{\gamma\in\widetilde{\Gamma}}\gamma^{\ast}g^{\ast}V_{\ast}$, where $\widetilde{\Gamma}$ is the subset \eqref{subset}. In fact, we have an isomorphism 
\begin{equation*}
h^{\ast}(f_{\ast}V_{\ast})\,\longrightarrow\, \bigoplus_{\gamma\in \widetilde{\Gamma}} \gamma^{\ast}g^{\ast}V_{\ast}
\end{equation*}
which is the composition
of the inclusion map $h^{\ast}(f_{\ast}V_{\ast})\,\hookrightarrow\, \bigoplus_{\gamma\in\Gamma}\gamma^{\ast}g^{\ast}V_{\ast}$ (see \eqref{invariant}) with the natural projection 
\begin{equation*}
\bigoplus_{\gamma\in \Gamma} \gamma^{\ast}g^{\ast} V_{\ast} \,\longrightarrow\,
\bigoplus_{\gamma\in\widetilde{\Gamma}}\gamma^{\ast}g^{\ast}V_{\ast}
\end{equation*}
given by the inclusion map $\widetilde{\Gamma}\,\hookrightarrow\,\Gamma$.

Thus, 
\begin{equation}\label{tgamma}
h^{\ast}(f_{\ast}V_{\ast})\ \cong\ \bigoplus_{\gamma\in \widetilde{\Gamma}} \gamma^{\ast}g^{\ast}V_{\ast}.
\end{equation}
Since $V_{\ast}$ is parabolic ample (respectively, parabolic nef), Theorem \ref{pull} says that $\gamma^{\ast}g^{\ast}V_{\ast}$ is parabolic
ample (respectively, parabolic nef) for each ${\gamma}\,\in\, \Gamma$. So $\bigoplus_{{\gamma}\in \widetilde{\Gamma}} \gamma^{\ast}g^{\ast} V_{\ast}$ is
parabolic ample (respectively, parabolic nef). A direct summand of parabolic ample vector bundle (respectively, parabolic nef vector bundle) is again parabolic ample (respectively, parabolic nef), and hence from
\eqref{tgamma} it follows that $h^{\ast}(f_{\ast}V_{\ast})$ is parabolic ample (respectively, parabolic nef). Again by Theorem
\ref{pull} the parabolic vector bundle $f_{\ast}V_{\ast}$ is parabolic ample (respectively, parabolic nef).

To prove the converse, assume that $f_{\ast}V_{\ast}$ is parabolic ample (respectively, parabolic nef).
We will show that $V_{\ast}$ is parabolic ample (respectively, parabolic nef).

The parabolic vector bundle $h^{\ast}(f_{\ast}V_{\ast})$ is parabolic ample (respectively, parabolic nef) 
because $f_{\ast}V_{\ast}$ is parabolic ample (respectively, parabolic nef) (see Theorem \ref{pull}). So from 
\eqref{tgamma} it follows that the direct sum $\bigoplus_{\gamma\in \widetilde{\Gamma}} \gamma^{\ast} 
g^{\ast}V_{\ast}$ is parabolic ample (respectively, parabolic nef). Consequently, 
$\gamma^{\ast}g^{\ast}V_{\ast}\, \subset\, \bigoplus_{\gamma\in \widetilde{\Gamma}} \gamma^{\ast}g^{\ast} 
V_{\ast}$ is parabolic ample (respectively, parabolic nef) for every $\gamma\,\in\, \widetilde{\Gamma}$. Now 
Theorem \ref{pull} says that $V_{\ast}$ is parabolic ample (respectively, parabolic nef).
\end{proof}

\begin{remark}
The direct image of a nef vector bundle need not be nef. For example,
for a degree two map $f\,:\, {\mathbb P}^1\, \longrightarrow\, {\mathbb P}^1$, we have
$f_* {\mathcal O}_{{\mathbb P}^1} \,=\, {\mathcal O}_{{\mathbb P}^1}\oplus {\mathcal O}_{{\mathbb P}^1}(-1)$.
\end{remark}

\begin{lemma}
The parabolic direct image commutes with parabolic dual; that is,
\begin{equation*}
f_{\ast}(V_{\ast}^{\vee})\ =\ (f_{\ast}V_{\ast})^{\vee}.
\end{equation*}
\end{lemma}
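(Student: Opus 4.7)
The plan is to verify the identity after pullback to the Galois closure and then descend. Retain the notation from the proof of Theorem \ref{push}: $h\,:\,Z\,\longrightarrow\, X$ is the Galois closure of $f$ with group $\Gamma$, $g\,:\,Z\,\longrightarrow\, Y$ has Galois group $G\,\subset\, \Gamma$, and $h\,=\, f\circ g$. Equation \eqref{invariant} in that proof supplies the canonical identification
\[
h^{\ast}(f_{\ast}V_{\ast})\,=\, \mathcal{V}_{\ast}^{G}, \qquad \mathcal{V}_{\ast}\,:=\, \bigoplus_{\gamma\in\Gamma}\gamma^{\ast}g^{\ast}V_{\ast},
\]
where $G$ acts on $\mathcal{V}_{\ast}$ (over the trivial action on $Z$) via the action described there.

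Applying this identification with $V_{\ast}^{\vee}$ in place of $V_{\ast}$, and using both that parabolic pullback commutes with parabolic dual (\cite[Remark 3.4]{AB}) and that parabolic dual commutes with finite direct sums, one obtains
\[
h^{\ast}(f_{\ast}(V_{\ast}^{\vee}))\,=\, \Bigl(\bigoplus_{\gamma\in \Gamma}\gamma^{\ast}g^{\ast}V_{\ast}^{\vee}\Bigr)^{\!G}\,=\, (\mathcal{V}_{\ast}^{\vee})^{G},
\]
with the dual $G$-action on $\mathcal{V}_{\ast}^{\vee}$. On the other hand, again by \cite[Remark 3.4]{AB}, $h^{\ast}((f_{\ast}V_{\ast})^{\vee})\,=\, (h^{\ast}(f_{\ast}V_{\ast}))^{\vee}\,=\, (\mathcal{V}_{\ast}^{G})^{\vee}$.

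The lemma thus reduces to the canonical identification $(\mathcal{V}_{\ast}^{G})^{\vee}\,\cong\, (\mathcal{V}_{\ast}^{\vee})^{G}$ of parabolic bundles on $Z$. Since we work in characteristic zero, the averaging operator $\tfrac{1}{|G|}\sum_{t\in G}t$ furnishes a canonical $G$-equivariant splitting $\mathcal{V}_{\ast}\,=\, \mathcal{V}_{\ast}^{G}\oplus \mathcal{V}'$ into the invariant and non-invariant isotypic components; dualizing this splitting yields the desired isomorphism. Putting everything together, $h^{\ast}((f_{\ast}V_{\ast})^{\vee})\,\cong\, h^{\ast}(f_{\ast}(V_{\ast}^{\vee}))$ canonically, and descending to $X$ gives the equality $f_{\ast}(V_{\ast}^{\vee})\,=\, (f_{\ast}V_{\ast})^{\vee}$.

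The main delicate point I expect is verifying the $\Gamma$-equivariance of the final isomorphism on $Z$, since $G$ is in general not normal in $\Gamma$ and the two group actions on $\mathcal{V}_{\ast}$ (the intrinsic $\Gamma$-action coming from $\mathcal{V}_{\ast}\,\cong\, h^{\ast}h_{\ast}g^{\ast}V_{\ast}$, and the $G$-action over the trivial action on $Z$ used to form invariants) play quite different roles. However, because every step above is built from natural operations (pullback, dual, direct sum, $G$-invariants), all of which commute with the $\Gamma$-action on $h^{\ast}(-)$, the resulting isomorphism is automatically $\Gamma$-equivariant; this is what enables descent via the Kawamata-type equivalence between parabolic bundles on $X$ (with the relevant weights) and $\Gamma$-equivariant parabolic bundles on $Z$.
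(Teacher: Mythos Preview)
Your argument is correct and follows the same overall strategy as the paper: pull back to the Galois closure $h:Z\to X$, verify the identity there, and descend via $\Gamma$-equivariance. The difference lies only in how the comparison on $Z$ is carried out. The paper uses the coset-representative description \eqref{tgamma}, namely $h^{\ast}(f_{\ast}V_{\ast})\cong \bigoplus_{\gamma\in\widetilde{\Gamma}}\gamma^{\ast}g^{\ast}V_{\ast}$, and simply dualizes this direct-sum decomposition termwise; substituting $V_{\ast}^{\vee}$ into the same formula then matches both sides immediately. You instead stay with the invariant description $h^{\ast}(f_{\ast}V_{\ast})=\mathcal{V}_{\ast}^{G}$ and invoke the characteristic-zero fact $(\mathcal{V}_{\ast}^{G})^{\vee}\cong(\mathcal{V}_{\ast}^{\vee})^{G}$ via the Reynolds projector. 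Your route is a bit more conceptual and avoids the auxiliary choice of $\widetilde{\Gamma}$, which makes the naturality (hence $\Gamma$-equivariance) of the identification more transparent; the paper's route is more explicit and reaches the conclusion in fewer steps. Both are equally valid implementations of the same idea.
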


\begin{proof}
As in \eqref{h}, let $h\ :\ Z\ \longrightarrow\ X$ denote the Galois closure of $f$.
It suffices to show that
\begin{equation}\label{s1}
h^\ast (f_{\ast}(V_{\ast}^{\vee}))\ =\ h^\ast ((f_{\ast}V_{\ast})^{\vee})
\end{equation}
as $\Gamma$--equivariant vector bundles, where $\Gamma$ is the Galois group of $h$ (see \eqref{Aut}).
Since $(h^{\ast}f_{\ast}V_{\ast})^{\vee}\, \cong\, h^\ast ((f_{\ast}V_{\ast})^{\vee})$
(see \cite[Remark 3.4]{AB}), we know that \eqref{s1} is equivalent to the following:
\begin{equation}\label{s2}
h^\ast (f_{\ast}(V_{\ast}^{\vee}))\ =\ (h^{\ast}f_{\ast}V_{\ast})^{\vee}
\end{equation}
as $\Gamma$--equivariant vector bundles.

Recall the description $h^{\ast}(f_{\ast}V_{\ast})\,\cong\, \bigoplus_{\gamma\in \widetilde{\Gamma}}
\gamma^{\ast}g^{\ast}V_{\ast}$ in \eqref{tgamma}. Taking the parabolic dual of both sides:
\begin{equation*}
(h^{\ast}f_{\ast}V_{\ast})^{\vee}\ \cong\ \left(\bigoplus_{\gamma\in \widetilde{\Gamma}}\gamma^{\ast}
g^{\ast}V_{\ast}\right)^{\vee} \ \cong\
\bigoplus_{\gamma\in\widetilde{\Gamma}} \left( \gamma^{\ast}g^{\ast}V_{\ast} \right)^{\vee}.
\end{equation*}
As taking parabolic dual commutes with the pullback operation (see \cite[Remark 3.4]{AB}), we have:
\begin{equation}\label{dual}
(h^{\ast}f_{\ast}V_{\ast})^{\vee}\ \cong\ 
\bigoplus_{\gamma\in\widetilde{\Gamma}} \left( \gamma^{\ast}g^{\ast}V_{\ast} \right)^{\vee}\ \cong\
\bigoplus_{\gamma\in\widetilde{\Gamma}} \gamma^{\ast}g^{\ast}\left(V_{\ast} ^{\vee} \right).
\end{equation}
Substituting $V_{\ast}^\vee$ in place of $V_{\ast}$ in \eqref{tgamma},
$$
h^{\ast}(f_{\ast}(V^\vee_{\ast}))\ \cong\ \bigoplus_{\gamma\in \widetilde{\Gamma}}
\gamma^{\ast}g^{\ast}(V_{\ast}^\vee) .
$$
Combining this with \eqref{dual} we conclude that \eqref{s2} holds. This completes the proof.
\end{proof}

\begin{corollary}
Let $X$ and $Y$ be smooth irreducible projective curves, and let $f \,:\, Y \,\longrightarrow\, X$ be a 
non-constant morphism. A parabolic vector bundle $\V$ on $Y$ is parabolic anti-ample (respectively, parabolic 
anti-nef) if and only if its pushforward $f_{\ast}\V$ on $X$ is parabolic anti-ample (respectively, parabolic 
anti-nef).
\end{corollary}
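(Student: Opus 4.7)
The plan is to chain together the three ingredients already established in the paper: the definition of parabolic anti-ample / anti-nef in terms of the parabolic dual, Theorem~\ref{push} applied to the dual bundle, and the lemma immediately preceding the corollary which asserts that parabolic direct image commutes with parabolic dual. The structure of the argument will be a symmetric string of ``if and only if'' statements, which is essentially forced once the ingredients are lined up.

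More concretely, I would start from the side of $V_*$ on $Y$. By the definition of parabolic anti-ample (respectively, parabolic anti-nef), $V_*$ is parabolic anti-ample (respectively, parabolic anti-nef) if and only if the parabolic dual $V_*^{\vee}$ is parabolic ample (respectively, parabolic nef). Applying Theorem~\ref{push} to the parabolic vector bundle $V_*^{\vee}$, this is in turn equivalent to $f_{\ast}(V_*^{\vee})$ being parabolic ample (respectively, parabolic nef). Using the preceding lemma, which identifies $f_{\ast}(V_*^{\vee})$ with $(f_{\ast}V_*)^{\vee}$, this becomes equivalent to $(f_{\ast}V_*)^{\vee}$ being parabolic ample (respectively, parabolic nef). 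Finally, invoking the definition once more on the curve $X$ turns this into the assertion that $f_{\ast}V_*$ is parabolic anti-ample (respectively, parabolic anti-nef).

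Since each step above is already in place, the proof is essentially a matter of citation and does not require any new argument or calculation. If there is any subtle point, it is to confirm that the duality lemma is stated and proved for the relevant pairing of ample and nef cases in the parabolic setting, so that one can apply it uniformly to both variants in a single chain of equivalences; the lemma as stated is formulated exactly for this purpose, so no additional work is needed. Thus the main (and only) task in writing the proof is to record this chain cleanly, mirroring the structure of the corresponding pullback corollary proved earlier in the paper.
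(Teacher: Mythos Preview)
Your proposal is correct and matches the paper's own proof essentially verbatim: both combine the definition of parabolic anti-ample/anti-nef, Theorem~\ref{push} applied to $V_\ast^\vee$, and the preceding lemma identifying $f_\ast(V_\ast^\vee)$ with $(f_\ast V_\ast)^\vee$. The only cosmetic difference is that the paper writes out the forward and converse implications separately, whereas you phrase it as a single chain of equivalences.
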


\begin{proof}
Assume that $\V$ is parabolic anti-ample (respectively, parabolic anti-nef). By definition, the dual $V_{\ast}^{\vee}$ is parabolic ample (respectively, parabolic nef). From
Theorem \ref{push} we know that $f_{\ast} (\V^{\vee})$ is parabolic ample (respectively, parabolic nef). So
$f_{\ast} (\V^{\vee})\,=\, (f_{\ast} \V)^{\vee}$ is parabolic ample (respectively, parabolic nef). In other words,
$f_{\ast}\V$ is parabolic anti-ample (respectively, parabolic anti-nef).

To prove the converse, assume that $f_{\ast}\V$ is parabolic anti-ample (respectively, parabolic anti-nef). So
$f_{\ast} (\V^{\vee})\,=\, (f_{\ast} \V)^{\vee}$ is parabolic ample (respectively, parabolic nef). Theorem \ref{push} says that
$\V^{\vee}$ is parabolic ample (respectively, parabolic nef). Hence $\V$ is parabolic anti-ample (respectively, parabolic anti-nef).
\end{proof}

\section*{Acknowledgements}

The second named author is partially supported by a J. C. Bose Fellowship (JBR/2023/000003).

\section*{Declarations}

On behalf of all authors, the corresponding author states that there is no conﬂict of interest. No data
were used or generated.

\end{document}